\newtheorem{theorem}{Theorem}[section]
\newtheorem{lemma}{Lemma}[section]
\newcommand{\RR}{{\mathbb{R}}}
\newcommand{\EE}{{\mathbb{E}}}
\newcommand{\HH}{{\mathbb{H}}}
\subjclass{Primary  53A10, Secondary 57R22, 57M50 }
\keywords{hyperbolic geometry, isoperimetric region, Cartan--Hadamard, constant mean curvature surface}
\begin{document}
\title{Isoperimetric Regions in Nonpositively Curved Manifolds}
\author{Joel Hass}
\address{Department of Mathematics, University of California, Davis
California 95616 \& School of Mathematics, Institute for Advanced Study, Princeton CA 08540}
\email{hass@math.ucdavis.edu}
\thanks{Partially supported by the Institute for Advanced Study and the  Ambrose Monell Foundation}
\date{\today}
\begin{abstract}
Isoperimetric regions minimize the size of their boundaries among all regions
with the same volume.  In Euclidean and Hyperbolic space, isoperimetric regions
are round balls. We show that isoperimetric regions in two and three-dimensional 
nonpositively curved manifolds are not necessarily convex balls, and need not even be connected.  
\end{abstract}                                     
 \maketitle

\section{Introduction}
A round ball  in Euclidean space $\EE^n$ minimizes the size of its boundary among all regions with the same volume.
The same is true in hyperbolic space $\HH^n$.
In this paper we examine the extent to which this isoperimetric
property carries over to spaces of nonpositive curvature.

Let $|W|$ denote the  volume of a  Riemannian manifold.
An {\em isoperimetric region} in an $n$-manifold is a region $W$ ($n$-dimensional submanifold with $(n-1)$dimensional boundary) such that
 $\partial W$ minimizes $(n-1)$-volume among all regions enclosing the same volume.
So an isoperimetric region satisfies 
$$
|\partial W | = \inf \{| \partial U |  :  U \subset M, \  |U| =|W| \}.
$$

A {\em Cartan--Hadamard} manifold is a complete, simply-connected Riemannian manifold  $(M, g) $ with
nonpositive sectional curvature.
The standard examples are  $\EE^n$, with curvature zero, and  $\HH^n$, with curvature  -1, but more general examples have
variable nonpositive curvature.
Cartan--Hadamard manifolds share many geometric properties
with $\EE^n$ and $\HH^n$. For example, any Cartan--Hadamard manifold is diffeomorphic to $\EE^n$ and
any two points in such a manifold are connected by a unique geodesic.

The classical isoperimetric inequality for $\EE^n$ states that for any region 
 $W \subset \EE^n$, we have $| \partial W| \ge    |\partial B^n|$ where 
 $ B^n  \subset \EE^n$ is a round ball with $| B^n | =| W| $ \cite{Osserman}.
A similar result holds for $\HH^n$, for which the isoperimetric inequality states  that
 $| \partial W| \ge    |\partial B_{-1}^n |$, where $ B_{-1}^n  $ is the
the ball in hyperbolic space satisfying $| B_{-1}^n | = | W| $.

The {\em Generalized Cartan--Hadamard Conjecture} was posed in various forms by 
Aubin, Burago-Zalgaller and by Gromov \cite{Aubin, BuragoZalgaller, Gromov}.
It states that if $M$ is a Cartan--Hadamard
manifold with sectional curvatures bounded above by $\kappa \le 0,$ 
and if $W$ is any region in $M$, then
$| \partial W| \ge   |\partial B_\kappa^n |$, where  $B_\kappa^n$ is 
the ball in the space form of constant curvature $\kappa$ 
with $ | B_\kappa^n | = | W| $.
A  summary of  known results concerning this conjecture
is found in the recent paper of  Kloeckner and Kuperberg \cite{KloecknerKuperberg:15}.
Various cases were  proved by Weil ($n=2, \kappa = 0$)  \cite{Weil}, 
 Bol ($n=2, \kappa < 0$)  \cite{Bol}, Croke ($n=4, \kappa=0$) \cite{Croke}, Kleiner ($n=3, \kappa \le 0$) \cite{Kleiner},
 Morgan and Johnson (small domains) \cite{MorganJohnson},  Druet (under a scalar curvature hypothesis) \cite{Druet},  and Kloeckner and Kuperberg (under 
 a variety of geometric conditions) \cite{KloecknerKuperberg:15}.

In Euclidean and hyperbolic space, isoperimetric regions that realize equality in the
isoperimetric inequality are uniquely realized by round balls.
Isoperimetric regions were classified for
special classes of Riemannian surfaces by Benjami and Cao \cite{BenjamiCao} and by  Howards, Hutchings  and Morgan \cite{HowardsHutchingsMorgan}.
Some cases in flat 3-manifolds were derived by in Hauswirth, Perez, Romon and Ros \cite{Hauswirth}.  The nature of general
isoperimetric regions remains mysterious, even in
well understood manifolds such as tori and complex hyperbolic space  \cite{Ros}.

Kloeckner and Kuperberg posed the  following question \cite[Question 4.1]{KloecknerKuperberg:15}: \\
{\bf Question.} {\em If $M $ is a Cartan-Hadamard manifold and $W$ minimizes  $|\partial W|$
for some fixed value of $|W|$, then is it convex? Is it a topological ball?  }

We show that the answer to both parts of this question is no in dimensions two and three.
In fact the isoperimetric region need not even be connected. 

\begin{theorem} \label{nonconnected}
There are  Cartan--Hadamard manifolds in dimensions two and three that contain
disconnected  isoperimetric regions.
\end{theorem}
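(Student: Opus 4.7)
The plan is to construct a ``dumbbell'' Cartan--Hadamard manifold $M$ in each of dimensions two and three, consisting of two large nearly flat pockets $D_1, D_2$ of equal volume $V_0$ joined by a highly negatively curved neck of geodesic length at least $L_0 > 0$, and then to exhibit a volume $V$ just below $2V_0$ for which the isoperimetric region is the disjoint union of two round Euclidean balls, one in each pocket. I would build $M$ in dimension $n \in \{2,3\}$ by starting from hyperbolic space $\HH^n$, choosing two far-apart disjoint geodesic balls $U_1, U_2$, and modifying the metric inside each $U_i$ so that it becomes isometric to a large Euclidean ball $D_i$. A smooth interpolation along a thin collar of $\partial U_i$ can be carried out so that the sectional curvature stays in $[-1,0]$ everywhere, yielding a simply connected, complete, nonpositively curved manifold.

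For a fixed $V$ slightly less than $2V_0$, the disconnected competitor $W_d$ formed by placing a round Euclidean ball of volume $V/2$ in each $D_i$ has boundary measure
\[
P_d(V) = 2\, c_n (V/2)^{(n-1)/n},
\]
where $c_n$ is the Euclidean isoperimetric constant. The core of the argument is to show that every \emph{connected} region $W \subset M$ with $|W| = V$ has $|\partial W| > P_d(V)$. Since $V > V_0$, $W$ cannot lie in a single pocket. If $W$ lies essentially inside one pocket plus a cap in the negatively curved region of volume at least $V - V_0$, the cap's boundary is bounded below by the hyperbolic isoperimetric profile (Bol in dimension two, Kleiner in dimension three), which grows at least linearly in volume; for $V_0$ large this exceeds $P_d(V)$ by a wide margin. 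Alternatively $W$ straddles both pockets, and so must contain a tube of some cross-sectional size $\varepsilon$ through the neck. The pocket pieces of $W$ then give boundary close to $P_d(V)$, reduced by two tube-mouth disks of size $\sim \varepsilon^{n-1}$, while the tube itself contributes a lateral surface of size at least $\sim \varepsilon^{n-2} L_0$. For any admissible $\varepsilon < L_0$ the lateral term dominates, producing strict excess over $P_d(V)$.

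The principal difficulty lies in the tube-joined case, where the infimum of $|\partial W|$ over connected competitors equals $P_d(V)$ and is attained only in the degenerate limit $\varepsilon \to 0$, when $W$ converges in measure to $W_d$ itself. A proof of strict inequality must therefore cover every admissible connected shape, not just tube-joined ones, which calls for existence and interior smoothness of an isoperimetric minimizer (standard from geometric measure theory), together with a classification argument invoking the quantitative negative curvature in the neck to rule out exotic connected competitors. Existence of an isoperimetric region at the chosen $V$, ruling out escape of mass to infinity, follows from the concentrating effect of the pockets via a standard concentration-compactness argument; the conclusion is then that the isoperimetric region exists, achieves the value $P_d(V)$, and is uniquely realized by $W_d$, proving Theorem \ref{nonconnected}.
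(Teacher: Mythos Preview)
Your overall strategy --- two flat pockets in a hyperbolic background, a trichotomy on how a connected competitor meets the pockets --- is exactly the paper's. In dimension two your argument is essentially complete; the ``meets both pockets'' case is handled by the length of the neck alone, just as in the paper.

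The genuine gap is the three-dimensional ``meets both pockets'' case. You correctly observe that a thin tube of cross-section $\varepsilon$ contributes lateral area $\sim \varepsilon L_0$, which vanishes as $\varepsilon\to 0$, so the infimum of $|\partial W|$ over connected competitors is exactly $P_d(V)$ and is not attained. Your resolution --- ``a classification argument invoking the quantitative negative curvature in the neck'' --- is not an argument; it names the difficulty without supplying an idea. The paper's fix is substantive and specific: since $\partial W$ is a \emph{stable} constant mean curvature surface, a blow-up at a putative sequence of points where the second fundamental form is unbounded would produce a nonflat complete stable minimal surface in $\RR^3$, contradicting the theorem of do~Carmo--Peng and Fischer-Colbrie--Schoen. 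This yields a uniform bound on $|II|$ over the middle half of the neck, hence a uniform lower bound on the area of $\partial W$ in every small ball centered on $\partial W$, and therefore total area growing linearly with $d$. Without this stability/curvature-estimate step (or an equivalent), your 3D proof does not close.

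A smaller imprecision: in the ``one pocket'' case you invoke Bol/Kleiner on the cap $W\setminus D_1$, but the topological boundary of that cap includes a piece on $\partial D_1$ that is \emph{not} part of $\partial W$, so the standard isoperimetric profile does not directly bound $|\partial W|$. The paper instead applies the Choe--Ritor\'e relative isoperimetric inequality for regions in the complement of a convex body in $\HH^n$ ($n=2,3$), which bounds exactly the portion of $\partial W$ outside the pocket. Your route can be salvaged by subtracting $|\partial D_1|$ from the hyperbolic profile and checking the numbers still work, but you should say so explicitly.
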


We establish this by constructing explicit Cartan--Hadamard manifolds for which the
 isoperimetric regions appear as islands of zero  curvature in a large
hyperbolic ocean. We give an explicit construction of such manifolds for dimension two in Theorem~\ref{dim2} of Section~\ref{dimensiontwo}  and for dimension three
in Theorem~\ref{dim3} of Section~\ref{dimensionthree}. It is straightforward to generalize the construction to arrange for any isoperimetric region to have at least
$k$ components, for $k$ any positive integer.

The construction in Theorem~\ref{nonconnected} appears to have been overlooked.
One possible reason is that the derivation of the associated isoperimetric inequalities can often be reduced to the study of connected regions.
Consider for example a region $W \subset \EE^2$, for which the isoperimetric inequality states  
\begin{equation}
|W| \le \frac{  |\partial W|^2 }{  4\pi }. \label{isopEqn}
\end{equation}
Suppose that a region $W$ has two components $W_1$ and $ W_2$ whose areas sum to those of $W$, so that  $|W_1| + |W_2| = |W|$.
If Equation~(\ref{isopEqn}) holds for each of $W_1, W_2$, then it also holds for $W$,
since
\begin{equation}
 \label{convexity}
|W| =  |W_1| + |W_2|  \le \frac{  |\partial W_1|^2 } {4\pi} + \frac{ |\partial W_2|^2 }{  4\pi }  
\le \frac{ ( |\partial W_1|  + | \partial W_2|)^2 }{  4\pi } =  \frac{  |\partial W|^2 }{  4\pi }. 
\end{equation} 
The convexity property used to obtain the inequality in Equation~(\ref{convexity}) extends to the
isoperimetric inequalities conjectured to hold in all Cartan--Hadamard spaces \cite{KloecknerKuperberg:15},
so it suffices to establish the  Cartan--Hadamard Conjecture for connected regions. 
Furthermore, it was known that the conjecture holds for the
regions consisting of geodesic balls of fixed radius in a Cartan--Hadamard manifold.
Finally, connectedness of isoperimetric regions can be established in many cases.
For example in a homogeneous space, a component of a disconnected isoperimetric region can be 
translated by an isometry until it touches a second component without overlapping, 
contradicting the known boundary regularity properties of isoperimetric regions. 
Nevertheless connectivity does not hold in general Cartan--Hadamard manifolds.

The existence of isoperimetric regions in Cartan--Hadamard manifolds is not automatic, since 
a sequence of increasingly efficient regions enclosing a 
given volume may drift off to infinity, yielding no limiting region.
The manifolds that we construct  are isometric with
hyperbolic space away from a compact set, and existence holds in this setting.
A general existence result that covers our examples is given
by Ritore and Sinestrari  \cite[Theorem 1.21]{RitoreSinestrari}.

In Section~\ref{dimensiontwo} we establish Theorem~\ref{nonconnected} in dimension two.
In Section~\ref{dimensionthree} we present additional arguments needed to extend the construction to three
dimensions. We end with some concluding remarks in Section~\ref{remarks}.

\section{Dimension Two} \label{dimensiontwo}
In this section we construct an isoperimetric region in a two-dimensional Cartan--Hadamard manifold that
consists of two components.  A similar construction gives an isoperimetric region possessing any
finite number of components.

\begin{theorem} \label{dim2}
There are  two-dimensional Cartan--Hadamard manifolds that contain
disconnected  isoperimetric regions.
\end{theorem}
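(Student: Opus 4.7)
The approach is to construct a Cartan--Hadamard surface $M$ that, outside a compact set, is isometric to the hyperbolic plane $\HH^2$, and that contains two isometrically embedded closed Euclidean disks $F_1, F_2$ of equal area $A/2$ placed at a large hyperbolic distance from one another, for a specific (sufficiently large) target volume $A > 0$. The natural candidate for a disconnected isoperimetric region of volume $A$ is then $W^{*} := F_1 \cup F_2$, whose boundary consists of two Euclidean circles of total length $2\sqrt{2\pi A}$.

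For the construction, I would work in geodesic polar coordinates $(r, \theta)$ around two widely separated points $p_1, p_2 \in \HH^2$, write the metric as $dr^2 + f(r)^2 d\theta^2$ in disjoint small patches, and replace the hyperbolic warp $\sinh r$ by a smooth warp equal to $r$ on an initial interval $[0, r_0]$ (with $\pi r_0^2 = A/2$) and matching $\sinh(r - c)$ past a convex transition region, for an appropriate shift $c$. Arranging $f'' \ge 0$ throughout forces $K = -f''/f \le 0$, so the resulting $M$ is simply connected, complete, nonpositively curved, hence Cartan--Hadamard; it contains the desired flat disks and agrees with $\HH^2$ at infinity.

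Existence of an area-$A$ isoperimetric region in $M$ follows from \cite[Theorem~1.21]{RitoreSinestrari}. To show $W^{*}$ is optimal, I would take an arbitrary competitor $W$ with $|W| = A$ and combine Weil's $2$-dimensional Cartan--Hadamard inequality $|\partial U|^2 \ge 4\pi |U|$ (applied componentwise, with equality only when the component is a Euclidean disk inside a flat part) with Bol's hyperbolic isoperimetric bound applied to portions of $W$ lying in the ocean. Since $|F_j| = A/2 < A$, no component of area $A$ can be such a flat disk. Choosing both $A$ and the separation $d$ between the two islands large enough makes any connection between the islands through the hyperbolic ocean prohibitively expensive, and forces the minimum to be attained by configurations splitting as $W = W_1 \sqcup W_2$ with each $W_i$ concentrated near one island.

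The key technical step, which I expect to be the main obstacle, is a sharp single-island estimate: any region $U$ of area $a$ in the single-island model surface (one flat disk of area $A/2$ attached to $\HH^2$) satisfies $|\partial U| \ge h(a)$ for an explicit function $h$ combining the Weil bound inside the flat disk with the Bol bound outside. Given such an $h$, the problem reduces to minimizing $h(a_1) + h(a_2)$ subject to $a_1 + a_2 = A$, and a direct one-variable analysis shows that for all sufficiently large $A$ this infimum equals $2\sqrt{2\pi A}$ and is attained uniquely at $a_1 = a_2 = A/2$. This identifies $W^{*}$ as the unique isoperimetric region for the chosen $A$, proving Theorem~\ref{dim2}.
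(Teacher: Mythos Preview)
Your construction is exactly the paper's: a warped metric $dr^2+h(r)^2 d\theta^2$ with $h$ convex, linear on $[0,R-\delta]$, and equal to $\sinh(r-C)$ for $r>R+\delta$, duplicated around two far-apart centers in $\HH^2$. Existence is quoted from the same source, and the disconnected candidate $W^*=F_1\cup F_2$ is the paper's $W_0$.

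The divergence is in how you rule out connected competitors. The paper does not attempt to compute the single-island isoperimetric profile $h(a)$ or to identify $W^*$ as the unique minimizer. It fixes explicit numbers ($R=100$, $d=1000$, $A=62830$) and argues by three cases: (i) if $W$ misses both islands, the hyperbolic isoperimetric inequality forces $|\partial W|$ enormous; (ii) if $W$ meets both, connectedness forces $|\partial W|\ge 2d=2000$; (iii) if $W$ meets exactly one island, then at least $A/3$ of its area sits in the hyperbolic complement of a convex disk, and the paper invokes the Choe--Ritor\'e relative isoperimetric inequality (half-disks are optimal outside a convex body in $\HH^2$) to get $|\partial W|>20000$. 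In every case $|\partial W|>1257>|\partial W_0|$.

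Your ``main obstacle'' is exactly case (iii), and your proposed tool---Bol's inequality applied to the portion of $W$ in the ocean---does not apply directly, because $W\setminus F_1$ has free boundary along $\partial F_1$; Bol bounds the \emph{full} perimeter of a region in $\HH^2$, not the perimeter relative to a convex obstacle. You can rescue a crude bound by adding in the island's boundary circle and subtracting it afterwards: if $a_H=|W\setminus (F_1\cup A_1)|$ and $L=|\partial(F_1\cup A_1)|\approx\sqrt{2\pi A}$, then Bol gives $|\partial W|\ge \sqrt{4\pi a_H+a_H^2}-L\gtrsim A/2-\sqrt{2\pi A}$, which for large $A$ dwarfs $|\partial W^*|=2\sqrt{2\pi A}$. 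That suffices for the theorem as stated, but it is weaker than your claimed sharp profile $h(a)$, and it does not yield uniqueness of $W^*$. If you actually want a sharp $h(a)$ for $a>A/2$, you are essentially asking for the relative isoperimetric inequality outside a convex set in $\HH^2$, which is precisely the Choe--Ritor\'e theorem the paper cites; ``combining Weil inside with Bol outside'' will not produce it.

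Two smaller points. First, your reduction to minimizing $h(a_1)+h(a_2)$ over $a_1+a_2=A$ tacitly assumes each component of a minimizer sits near a single island; you should say why no component floats in the ocean (Bol again) and why no two components sit near the same island (concavity of $\sqrt{4\pi a}$). Second, uniqueness of $W^*$ is strictly stronger than Theorem~\ref{dim2} and is not asserted in the paper; aiming only for disconnectedness makes the argument much shorter.
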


\subsection{Curvature in Normal Coordinates} \label{constructPQ}
 
A metric described in geodesic normal (polar) coordinates    $(r, \theta)$  on $\RR^2$  by
 $$
 g = dr \otimes dr + h(r)^2 d\theta \otimes d\theta, 
 $$
has Gauss curvature given by
\begin{eqnarray}
 K(r,\theta) =- \frac{ h''(r)}{h(r)} . \label{eqn2}
\end{eqnarray}
A derivation can be found in Spivak \cite[p. 136] {Spivak}.

The function $h(r) = r$ results in the flat Euclidean plane $\EE^2$, while choosing $h(r) = \sinh (r)$ gives the hyperbolic metric on $\HH^2$, with
curvature $ K(r,\theta) = -1$.  
In these coordinates the length of the circle of radius $r$ is $2 \pi h(r)$.
The first variation of arc length formula then shows that  the normal curvature
of the radius $r$ circle is $k(r) = h'(r)/h(r)$.
Fix a positive constant $R > 1$   and let 
$$
 C = R - \sinh^{-1} R  \mbox{   \  \  and   \ \  }   \delta =\frac{1}{4 \pi R \sinh (2R) }.
$$

\begin{lemma} ~\label{h(r)}
There is a smooth function $h : \RR^+ \to \RR$   
with the following properties:
\begin{enumerate}
\item  $h(r) = r $ for $ 0 \le r \le R - \delta $,
\item  $h(r) = \sinh (r - C)$ for $ r>R +\delta $,
\item  $h(r) $ is convex for all $r>0$.
\end{enumerate}
\end{lemma}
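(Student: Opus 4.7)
The plan is to build $h$ by specifying $h'$ as a smooth nondecreasing interpolation on the transition window $[R-\delta, R+\delta]$ and recovering $h$ from the fundamental theorem of calculus. Two preliminary observations set the stage. The definition $C = R - \sinh^{-1} R$ gives $\sinh(R-C) = R$, so the two pieces $h(r)=r$ and $h(r)=\sinh(r-C)$ agree in value at $r = R$; only the slopes differ, $1$ versus $\cosh(R-C) = \sqrt{1+R^2}$. Because $R > 1$ and the explicit $\delta$ is small, the transition window is disjoint from $r = C$, so $\cosh(r-C) > 1$ and $\sinh(r-C) > 0$ on it.

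Choose a smooth nondecreasing cutoff $s \colon \RR \to [0,1]$ with $s \equiv 0$ on $(-\infty, R-\delta]$ and $s \equiv 1$ on $[R+\delta, \infty)$, flat to infinite order at both endpoints, and define
\[
h'(r) = 1 + s(r)\bigl(\cosh(r-C) - 1\bigr), \qquad h(r) = (R-\delta) + \int_{R-\delta}^{r} h'(t)\,dt
\]
on the transition interval. The flatness of $s$ ensures that $h'$ equals $1$ to infinite order at $R-\delta$ and equals $\cosh(r-C)$ to infinite order at $R+\delta$, so $h$ glues $C^{\infty}$-smoothly to the pieces $r$ and $\sinh(r-C)$ in value and in every derivative. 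The identity
\[
h''(r) = s'(r)\bigl(\cosh(r-C) - 1\bigr) + s(r)\sinh(r-C) \ge 0,
\]
combined with $h'' \equiv 0$ on $[0,R-\delta]$ and $h'' = \sinh(r-C) > 0$ on $[R+\delta,\infty)$, gives convexity throughout.

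The only constraint remaining is matching $h$ in value at $R+\delta$, which reduces to the single scalar integral condition
\[
\int_{R-\delta}^{R+\delta} s(t)\bigl(\cosh(t-C) - 1\bigr)\,dt = \sinh(R+\delta - C) - R - \delta.
\]
As $s$ varies among admissible cutoffs, the left-hand side sweeps out an open interval with endpoints $0$ and $\sinh(R+\delta-C) - \sinh(R-\delta-C) - 2\delta$. A short computation using $\sinh(R-C) = R$ together with the inequality $\int_u^{u+\delta}\cosh > \delta$ for $u > 0$ places the target strictly inside this range. An admissible $s$ achieving the required value then follows from an intermediate value argument applied to the continuous one-parameter family $s_\lambda = (1-\lambda)s_0 + \lambda s_1$ interpolating between two extreme admissible choices.

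The main obstacle is this last step: ensuring that the scalar matching condition at $R+\delta$ can be absorbed by the freedom in $s$ while preserving monotonicity. Once the two bounds on the target integral are verified, the lemma follows immediately by integration.
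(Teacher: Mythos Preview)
Your argument is correct and is essentially a detailed implementation of the paper's approach: the paper simply observes that the graphs of $y=r$ and $y=\sinh(r-C)$ cross at $(R,R)$ with the latter having larger slope, and asserts without further detail that a smooth convex interpolation exists on $(R-\delta,R+\delta)$. Your construction---interpolating the derivative via a monotone cutoff and then using an intermediate-value argument on the one remaining scalar matching condition---supplies exactly the verification the paper omits; the endpoint bounds you need, namely $\sinh(R-\delta-C)<R-\delta$ and $\sinh(R+\delta-C)>R+\delta$, follow immediately from $\sinh(R-C)=R$ together with $\cosh>1$, so the target integral indeed lies strictly between $0$ and $\sinh(R+\delta-C)-\sinh(R-\delta-C)-2\delta$.
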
 
\begin{proof}
The graphs of $y=x$ and $y = \sinh (r - C)$ cross at $(R,R)$.
For $R>1$ we have $\delta <R$, and the two graphs
can be smoothly interpolated by an increasing convex function over the 
interval $(R-\delta, R+\delta)$.
The  graph of  the resulting function $h$ is shown in Figure~\ref{hgraph}.
\end{proof}
 
\begin{figure}[htbp] 
   \centering
   \includegraphics[width=2in]{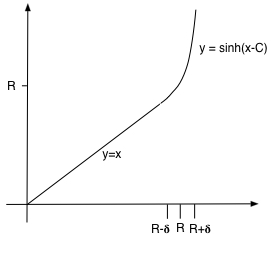} 
   \caption{Graph of the radial function $h$.}
   \label{hgraph}
\end{figure}

 We use Lemma~\ref{h(r)} to  explicitly construct a 2-dimensional Cartan-Hadamard manifold $P(R)$. We describe
 the Riemannian metric on $P(R)$ using normal coordinates around the origin.
The manifold $P(R)$ is the simply connected surface with smooth metric  
 $$
 g = dr \otimes dr + h(r)^2 d\theta \otimes d\theta.
 $$
 
\begin{lemma} ~\label{nonpositive}
The 2-dimensional manifold $P(R)$ has nonpositive curvature.
 It is flat on the disk of radius $R - \delta$ about the origin and
the exterior of the disk  of radius $R+\delta$ is isometric to the 
exterior of a disk of radius $C+\delta$ in the hyperbolic plane $H^2$.
On the annulus  with radii $R - \delta$ and $ R + \delta$, $P(R)$ has negative curvature, and the area of this annulus is less than $1/R$.
 \end{lemma}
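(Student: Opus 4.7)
The plan is to feed the piecewise description of $h$ from Lemma~\ref{h(r)} into the curvature formula~(\ref{eqn2}), $K = -h''(r)/h(r)$, and then to estimate the area of the transition annulus by direct integration. Since $h > 0$ on $\RR^+$, the sign of $K$ is opposite to that of $h''$; and since $h$ is convex by Lemma~\ref{h(r)}(3), we have $h'' \ge 0$ and hence $K \le 0$ throughout $P(R)$. On the inner disk $r < R - \delta$, $h(r) = r$ gives $h'' \equiv 0$ and $K \equiv 0$. On the exterior region $r > R + \delta$, $h(r) = \sinh(r-C)$ gives $h''(r) = h(r)$, so $K \equiv -1$.

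For the isometry on the exterior, I would substitute $s = r - C$. The metric becomes $ds \otimes ds + \sinh^2(s)\, d\theta \otimes d\theta$, the standard hyperbolic polar-coordinate metric on $\HH^2$. The boundary $r = R+\delta$ maps to hyperbolic radius $s = R - C + \delta = \sinh^{-1}(R) + \delta$, identifying the exterior of $P(R)$ isometrically with the exterior of the hyperbolic disk of that radius. On the open annulus $R-\delta < r < R+\delta$, I would establish $h'' > 0$ strictly, and hence $K < 0$: since $h'(R-\delta) = 1$ while $h'(R+\delta) = \cosh(\sinh^{-1}R + \delta) > 1$, and $h'$ is continuous and nondecreasing by convexity, $h'$ must strictly increase across the annulus; by arranging the interpolation in Lemma~\ref{h(r)} to be strictly convex on the transition interval, one obtains $h'' > 0$ throughout the open annulus.

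For the area bound, I would compute
\[
\text{area}(\text{annulus}) = 2\pi \int_{R-\delta}^{R+\delta} h(r)\, dr,
\]
and estimate the integrand by monotonicity: convexity together with $h'(R-\delta) = 1$ forces $h' \ge 1$ on $[R-\delta, R+\delta]$, so $h$ is increasing and $h(r) \le h(R+\delta) = \sinh(\sinh^{-1}R + \delta)$. Since $\sinh^{-1}R + \delta < 2R$ for $R>1$, this is bounded by $\sinh(2R)$, with strict inequality near $r = R-\delta$ where $h(r) \le R < \sinh(2R)$, so
\[
\text{area} < 2\pi \cdot 2\delta \cdot \sinh(2R) = 4\pi \delta \sinh(2R) = \frac{1}{R}
\]
by the definition of $\delta$.

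The only delicate point I see is the strict positivity of $h''$ on the open transition annulus: Lemma~\ref{h(r)} asserts only (nonstrict) convexity, so the interpolation must be chosen strictly convex on $(R-\delta, R+\delta)$ rather than, say, piecewise affine. Once that is arranged, the rest is direct substitution into the curvature formula together with an elementary integral estimate.
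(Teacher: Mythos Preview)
Your proposal is correct and follows the same route as the paper: the curvature claims come directly from $K=-h''/h$ together with the convexity of $h$, and the area bound from estimating $2\pi\int_{R-\delta}^{R+\delta} h(r)\,dr \le 4\pi\delta\,\sinh(\sinh^{-1}R+\delta) < 4\pi\delta\sinh(2R)=1/R$, exactly as in the paper's one-line computation. Your observation that strict negativity of $K$ on the open transition annulus requires the interpolation in Lemma~\ref{h(r)} to be chosen \emph{strictly} convex is a valid refinement that the paper leaves implicit.
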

\begin{proof}
The curvature properties follow from Equation~(\ref{eqn2}).
Note that $\delta < R$ and therefore Fubini's Theorem gives an area bound for the annulus with radii $R - \delta$ and $ R + \delta$ of
$$ 
2\pi  \sinh (R + \delta - C) (2\delta) = 4\pi  \sinh (  \delta +\sinh^{-1} R ) ( \delta)  < 4\pi \sinh (2R)  (\delta ) = 1/R.
$$
 \end{proof}

The metrics  on the surface $P(R)$ outside a disk of radius $R+\delta$ and on the hyperbolic plane outside a hyperbolic disk of radius $C+\delta$ agree.
as indicated in Figure~\ref{fig:P}. 
Thus $P(R)$ can be obtained from the hyperbolic plane by changing the hyperbolic metric on a disk of radius $C+\delta$.
Given a constant $d>0$, we can change the metric on the hyperbolic plane in the same way on two subdisks 
$D_1$ and $D_2$ of $H^2$ whose distance from one another is $d$.
We call the resulting surface $Q(R,d)$.

\begin{figure}[htbp] 
\centering
\includegraphics[width=1in]{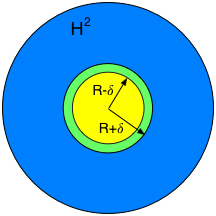} \hspace{1in}   \includegraphics[width=1in]{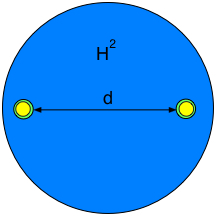} 
\caption{The Cartan-Hadamard surface $P(R)$ at left has a rotationally symmetric nonpositively curved Riemannian metric. It is isometric to hyperbolic 2-space in the complement of a disk $D$. The Cartan-Hadamard surface $Q(R,d)$ at right is isometric to $\HH^2$  in the complement of two  disks $D_1$ and $D_2$. Each disk is isometric to $D$ and they are separated by distance $d$.}
\label{fig:P}
\end{figure}

\subsection{Isoperimetric Regions in $P$ and $Q$}
We now show that for suitable choices of $R$ and $d$, there is an area $A$ such that the isoperimetric region for area $A$ in  $Q(R,d)$ 
is disconnected. 
For simplicity we use explicit values for $R$ and $d$, taking the values $R=100$ and $ d=1000$ to construct $Q=Q(100,1000)$, 
and investigate the isoperimetric region in $Q$ with area $A = 62830$.

For these values we have  
$$
C = R - \sinh^{-1} (R) \approx 94.70.
$$
and 
$$ 
\delta =\frac{1}{4 \pi R \sinh (200) }  \approx    (2.2) \times 10^{-90}.
$$
By Lemma~\ref{nonpositive}, the area of each of the two annuli in $Q$ that transition from flat to hyperbolic metrics is less than .01.
This is sufficiently small to ensure that these annuli will not play a significant role in the area comparisons we will make below.

A Euclidean disk of radius $ 100$ has area $ A_0 \approx  31416$ and circumference $\approx 628$,
and a hyperbolic disk of radius $100 - C \approx 5.3$ has area   $A_1 \approx  629$ and circumference $\approx 628$.
Thus $A$ is slightly smaller than the area enclosed in two  Euclidean disks of radius 100. 
For comparison, a hyperbolic disk with area $A_0$ has
radius $  \approx 9.9$, and circumference $ \approx 62838$.

Now consider an isoperimetric region of area $A = 62830$ in $Q(100, 1000)$.
One possibility for such a region consists of  a disconnected region $W_0$ containing  
two disjoint disks, with one contained in each of the two flat subdisks $D_1$ and $D_2$ of $Q$.
We will show that this $W_0$ has shorter boundary than any connected  region with the same area.

\begin{lemma} ~\label{1257}
A connected isoperimetric region $W$ in $Q$ has $|\partial W| > 1257$.
\end{lemma}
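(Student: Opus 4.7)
My approach is a case split based on how $W$ intersects the two flat disks $D_1, D_2$. Observe that by Lemma~\ref{nonpositive}, $|D_i| \le \pi R^2 + 1/R < 31416$, which is less than $|W| = 62830$. So $W$ cannot fit inside a single flat disk, and either $W$ meets both $D_i$ or $W$ avoids one of them.

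In the first case (meets both), I pick $p \in W \cap D_1$ and $q \in W \cap D_2$, giving $D := d(p,q) \ge d = 1000$. Letting $\gamma'$ be the geodesic through $p$ perpendicular to $\overline{pq}$ and $\phi(x) := $ signed distance to $\gamma'$ (with sign chosen so $q$ lies on the positive side), the CAT(0) property of $Q$ makes $\phi$ a $1$-Lipschitz function with $\phi(p)=0,\ \phi(q)=D$, whose level sets are unbounded equidistant curves. Connectedness of $W$ forces each $\phi^{-1}(t)$, $t \in [0,D]$, to meet $W$; boundedness of $W$ (as an isoperimetric region) forces each to also meet $W^c$. Hence $\phi^{-1}(t)$ crosses $\partial W$ at least twice for a.e.\ $t \in [0,D]$, and the coarea formula yields $|\partial W| \ge 2D \ge 2000 > 1257$.

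In the second case (say $W \cap D_2 = \emptyset$), let $W' := W \cap H$ with $H = Q \setminus (D_1 \cup D_2)$; then $|W'| \ge |W| - |D_1| > 31414$. Viewing $W'$ inside $\HH^2$ via the isometric identification of $H$ with the complement in $\HH^2$ of two small hyperbolic disks $\hat D_1, \hat D_2$, I fill in bounded complementary components to obtain a simply connected superset $\widetilde{W'}$ of equal or larger area and no larger boundary, and then apply Bol's hyperbolic isoperimetric inequality $|\partial U|^2 \ge |U|^2 + 4\pi |U|$ to conclude $|\partial W'|_{\HH^2} > 31414$. Since $W \cap D_2 = \emptyset$, the only portion of $\partial W'$ that need not lie in $\partial W$ is on $\partial D_1$ (where $W$ extends into $D_1$ but $W'$ terminates), of length at most $2\pi R < 629$. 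Hence $|\partial W| > 31414 - 629 > 1257$.

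The main obstacle I expect is the careful bookkeeping in the second case: verifying that the only portion of $\partial W'$ not contained in $\partial W$ is a subset of $\partial D_1$, so that the overhead is bounded by $|\partial D_1|$. Otherwise the estimates are quite loose (giving $2000$ and over $30000$ respectively, against the target $1257$), so numerical precision will not be at issue.
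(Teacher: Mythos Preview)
Your argument is correct and in fact more elementary than the paper's. Both proofs split into cases according to how $W$ meets the two modified disks, and in the ``meets both'' case both obtain $|\partial W|\ge 2000$ from the separation $d=1000$; your coarea/level-set justification is a careful version of what the paper states in one line.

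The genuine difference is in the ``avoids one disk'' case. The paper invokes the relative isoperimetric inequality of Choe--Ritor\'e for regions lying outside a convex body in $\HH^2$, applied to $W\setminus(D_1\cup A_1)$, to get a lower bound exceeding $20000$. You sidestep that theorem entirely: you view $W' = W\setminus D_1$ as a subset of $\HH^2$, apply the classical hyperbolic isoperimetric inequality (Bol) to its filled-in hull, and then absorb the artificial boundary along $\partial D_1$, of length at most $2\pi h(R+\delta)<629$, as an additive error. Because the hyperbolic perimeter of a region of area $>31414$ already exceeds $31414$, the subtraction of $629$ is harmless. This is strictly more elementary---it needs only the standard isoperimetric inequality in $\HH^2$---while the paper's route is perhaps more conceptual, since Choe--Ritor\'e is the sharp tool for regions in the complement of a convex set. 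Your two-case split also absorbs the paper's ``$W$ disjoint from both'' case into this one, which is tidy.

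Two small points worth tightening: first, your $D_i$ tacitly includes the transition annulus (as your area bound $\pi R^2+1/R$ shows), whereas the paper reserves $D_i$ for the flat disk and writes $D_i\cup A_i$; this is purely notational. Second, you invoke boundedness of $W$ in the coarea step; this is standard for isoperimetric regions in manifolds that are hyperbolic outside a compact set (and the paper uses it implicitly as well), but a one-line remark citing regularity would make the argument airtight.
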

\begin{proof}
We consider first the case  that $W$ is disjoint from  $(D_1 \cup A_1)  \cup ( D_2 \cup A_2) $.
Then $W$ is contained in a subsurface of $Q$ that is isometric to $\HH^2$. The 
length of the boundary of $W$ can then be bounded below by applying the
 isoperimetric inequality in $\HH^2$ which states that a region enclosing 
 an area of $A_0$  has boundary length greater than $62837 >  1257$.
 
Suppose now that  $W$ intersects  $D_1 \cup A_1$ and is disjoint from $D_2\cup A_2$ .
The area of $D_1$ is approximately $A/2$, so
that $|W \cap (D_1 \cup A_1) |< 2A/3$ and $W$ intersects the complement of  $D_1 \cup A_1$ in
a region of area at least $A/3 =20610$. 

The question of how short can one make the boundary curve of a region in hyperbolic space that lies
in the complement of a convex set (the complement of $D_1 \cup A_1$) was  studied by  Choe and Ritore \cite{ChoeRitore:07}.
They show that in two and three-dimensional hyperbolic space, a region in the complement of a convex domain  
 has longer boundary length then a half-disk of the same area. 
 A half-disk in a half-space of $\HH^2$ with area 20610 has boundary length greater than 20606.
As a result, a connected  region $W$ that meets $D_1$ must have boundary length no less than 20606 in the complement of $D_1$.
Since $20606 > 1257 $, the Lemma holds for such regions.

Finally assume that a region  $W$ meets  both $D_1 \cup A_1$ and $D_2\cup A_2$. Since the two disks are
distance 1000 apart, we must have $|\partial W| > 2000$ for $W$ connected. Again $2000 > 1257 $ and
a connected region that intersects both $D_1$ and $ D_2$  has $|\partial W| > 1257$. 
\end{proof} 

\noindent
{\em Proof of Theorem~\ref{dim2}.  }
The surface  $Q$ contains two disjoint flat disks $D_1$, $D_2$ with
combined area $ 62830$. Each disk  is
 surrounded by a radius $2\delta$ annulus of
area less than .01, which we call $A_1$ and $A_2$ respectively.  Outside these annuli,  
$Q$ is isometric to the complement in hyperbolic space of two round disks.
The union of these two disks forms a surface $W_0$ with $|\partial W_0| < 1256$

Now let $W$ be an isoperimetric region having area $A$ and assume for contradiction that $W$ is connected.
By Lemma~\ref{1257} we have $|\partial W|  > 1257$.
Since $|\partial W_0| < 1256$,  we conclude that  an isoperimetric region of  area $A$
cannot be connected.
\qed

\section{Dimension Three} \label{dimensionthree}

In this section we extend the  two-dimensional arguments of Section~\ref{dimensiontwo} to
construct a disconnected isoperimetric region in a three-dimensional Cartan--Hadamard manifold.

\begin{theorem} \label{dim3}
There are  three-dimensional Cartan--Hadamard manifolds that contain
disconnected  isoperimetric regions.
\end{theorem}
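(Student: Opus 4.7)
The plan is to parallel Section~\ref{dimensiontwo} step by step. First I would extend the construction of $P(R)$ to dimension three. On $\RR^3$ in geodesic polar coordinates $(r,\xi)\in\RR^+\times S^2$, consider the warped-product metric
\begin{equation*}
g = dr\otimes dr + h(r)^2\, g_{S^2},
\end{equation*}
where $g_{S^2}$ is the round unit sphere metric and $h$ is the radial function supplied by Lemma~\ref{h(r)}. Standard warped-product curvature formulas give two sectional curvatures at each point with $r>0$: the radial sectional curvature $K_{\mathrm{rad}}=-h''(r)/h(r)$ on every $2$-plane containing $\partial_r$, and the tangential sectional curvature $K_{\mathrm{tan}}=(1-h'(r)^2)/h(r)^2$ on every $2$-plane perpendicular to $\partial_r$. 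Convexity of $h$ forces $K_{\mathrm{rad}}\le 0$; for $K_{\mathrm{tan}}\le 0$ I need $h'\ge 1$, which is immediate on the flat region where $h'=1$, on the hyperbolic region where $h'(r)=\cosh(r-C)\ge 1$, and on the transition interval by convexity together with $h'(R-\delta)=1$. Thus $P(R)$ is a 3-dimensional Cartan--Hadamard manifold, flat on the ball of radius $R-\delta$ and isometric to the complement of a hyperbolic ball of radius $C+\delta$ in $\HH^3$ outside the ball of radius $R+\delta$; a Fubini estimate analogous to Lemma~\ref{nonpositive}, now involving an extra factor of $h(r)^2$, keeps the volume of the transition shell negligible after choosing $\delta$ sufficiently small.

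Next I would form $Q=Q(R,d)$ by removing two disjoint hyperbolic balls of radius $C+\delta$ in $\HH^3$, whose centers are at hyperbolic distance $d$ apart, and grafting in two isometric copies of the flat-plus-transition region of $P(R)$. Choose $R$ and $d$ large, set $V$ slightly less than $2\cdot\tfrac{4}{3}\pi R^3$, and let $W_0\subset Q$ be the disconnected candidate consisting of two flat balls nearly filling the flat cores; then $|\partial W_0|$ is arbitrarily close to $8\pi R^2$.

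The decisive step is a three-dimensional analogue of Lemma~\ref{1257}: every connected region $W\subset Q$ with $|W|=V$ satisfies $|\partial W|>8\pi R^2$. I would divide into the same three cases as in dimension two. In Case (1), $W$ is disjoint from both grafted regions and lies in a subset isometric to $\HH^3$, where the hyperbolic isoperimetric inequality together with the exponential volume growth of hyperbolic balls forces $|\partial W|$ to dominate $8\pi R^2$ once $R$ is large enough. In Case (2), $W$ meets exactly one grafted region and a fraction of the volume $V$ lies in the hyperbolic complement of a convex set; the Choe--Ritore half-space isoperimetric inequality in $\HH^3$, cited in Lemma~\ref{1257} and explicitly valid in dimensions two and three, again yields an exponential lower bound for the boundary area. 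In Case (3), $W$ meets both grafted regions, so being connected it has diameter at least $d$; a projection onto the geodesic joining the two flat cores together with the hyperbolic coarea formula then forces $|\partial W|$ to grow with $d$.

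The step I expect to be the main obstacle is Case (3). In dimension two, a connected region spanning two components at distance $d$ automatically has boundary length at least $2d$, essentially because its boundary must enclose a path of length $d$. No such direct estimate holds in dimension three, where a long thin tube can span distance $d$ with only linearly-in-$d$ lateral surface area rather than an area of order $d^2$. The argument instead slices $W$ by hyperbolic level hypersurfaces of the distance function to one flat core and applies the one-dimensional coarea inequality, using the $\HH^3$ linear isoperimetric inequality $|\partial U|\ge 2|U|$ on the hyperbolic portion; to make the resulting bound comfortably exceed $8\pi R^2$ one must choose $d$ substantially larger than in the two-dimensional case. Once the three cases are established, the theorem follows exactly as the proof of Theorem~\ref{dim2}: $W_0$ is disconnected with $|\partial W_0|$ strictly smaller than the connected lower bound, so every isoperimetric region of volume $V$ in $Q(R,d)$ must be disconnected.
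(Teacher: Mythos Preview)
Your construction of the warped product and your handling of Cases (1) and (2) agree with the paper essentially verbatim. The gap is in Case (3), and it is a genuine one.

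The coarea/slicing argument you sketch cannot yield any lower bound for $|\partial W|$ that grows with $d$. In $\HH^3$ a tube of radius $\epsilon$ about a geodesic segment of length $d$ has lateral area $2\pi\sinh(\epsilon)\,d$, which tends to $0$ as $\epsilon\to 0$ with $d$ fixed; the enclosed volume $\pi(\cosh\epsilon-1)\,d$ also tends to $0$. Thus a connected region meeting both flat balls can have arbitrarily small boundary area \emph{and} arbitrarily small volume in the intervening hyperbolic slab, so neither the coarea formula applied to $\partial W$, nor the linear inequality $|\partial U|\ge 2|U|$ applied to the hyperbolic portion of $W$, produces a positive lower bound. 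Your sentence ``a long thin tube can span distance $d$ with only linearly-in-$d$ lateral surface area'' is the source of the error: the implied constant in that linear bound is $2\pi\sinh\epsilon$, not a universal constant, and it can be driven to zero. The paper flags exactly this point: ``it is possible to span the region between these balls by a thin tube of arbitrarily low area. Thus we cannot say that $|\partial W|$ is large simply because $d$ is large, as in dimension two.''

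The missing ingredient is that $\partial W$ is not an arbitrary surface but a \emph{stable constant mean curvature} surface, and stability is what rules out arbitrarily thin necks. The paper argues as follows: a blow-up at points of large second fundamental form would produce, in the limit, a complete nonflat stable minimal surface in $\RR^3$, contradicting the theorem of do~Carmo--Peng and Fischer-Colbrie--Schoen. Hence the second fundamental form of $\partial W$ is uniformly bounded on the middle slab $I_{[d/4,3d/4]}$, independently of $d$. This curvature bound gives a fixed $\epsilon>0$ and a uniform lower bound $|\partial W\cap B(w,\epsilon)|\ge 2\pi(\cosh\epsilon-1)$ for every $w\in\partial W$ in that slab. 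Packing of order $d/\epsilon$ disjoint such balls along $\partial W$ then forces $|\partial W|$ to grow linearly in $d$, and choosing $d$ large enough finishes Case (3). You correctly identified Case (3) as the crux, but the resolution requires this regularity input rather than a purely metric slicing argument.
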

   
We start by constructing for each $R>0$ a  Cartan-Hadamard 3-manifold $P_3(R)$ with
a  spherically symmetric Riemannian metric. The metric
 in geodesic normal coordinates is given by
 $$
 g = dr \otimes dr + h(r)^2  \omega  
 $$
where $ \omega  $ is the Riemannian metric on the unit 2-sphere and $h(r)$ was given in Section~\ref{dimensiontwo}.
This gives a Riemannian manifold $(M, g)$
that is symmetric under rotations around the origin. 
At each point  $p \in M$ we can construct an orthonormal frame with one vector in the radial direction $\partial/ \partial r$ and two additional
 vectors
tangent to the sphere $S(r)$ with $r = r(p)$.  The sectional curvatures in the tangent planes containing the vector
$\partial/ \partial r$ are determined by the totally geodesic plane through $p$ and the origin, and coincide with those
computed for  $P(R)$ in Section~\ref{dimensiontwo}. Thus these sectional curvatures are nonpositive.
It remains to compute the sectional curvature $K_S$ corresponding to a 2-plane tangent to the 
sphere $S(r)$.  

We determine this with the Gauss Equation, which relates the ambient sectional curvature $K_S$ to the intrinsic curvature $K_r$ of the 2-sphere
$S(r)$ and the determinant of the second fundamental form of $S(r)$:
$$
K_r = K_S + det(II)
$$
The constant curvature sphere  $S(r)$  has area $4\pi h(r)^2$, so the Gauss-Bonnet Theorem implies that $K_r = 1/h(r)^2$.
The principle curvature $k_n(r)$ at a point on  $S(r)$ can then be determined from the first variation
formula for the length $L(r) = 2\pi h(r)$ of a great circle, which states that $L'(r) = L(r) k_n(r)$.
This implies that $k_n(r) = L'(r)/L(r) = h'(r)/h(r)$.  
So $det(II) = h'(r)^2/h(r)^2$, and
$$
K_S = K_r - det(II) =  \frac{1- h'(r)^2 }{ h(r)^2  }   .
$$
Since $h'(r) \ge 1$ we see that $K_S$  is nonpositive.
Note that $K_S = 0 $  when $h(r)=r$, and that  $K_S = -1 $ when $h(r) = \sinh (r - C)$, as expected.
 
Having shown that  $(M, g)$ is a Cartan--Hadamard manifold, we construct  $P_3(100)$ and  $Q_3(100,d)$ as before, but with
a different value fo $\delta$.
The manifold $Q_3(100,d)$ contains two flat balls $B_1, B_2$ of radius 100, surrounded by thin collars  $C_1$ and $C_2$ of radius $2\delta$.
The exterior  $(B_1 \cup C_1) \cup (B_2 \cup C_2) $ is hyperbolic, with the two balls separated by a distance $d$.
We  take $d$ to be a large distance to be fixed later. For the value  $R=100$, the volume enclosed by a Euclidean sphere
of radius $R$ is $ \approx 4188790$ and the area of the enclosing sphere is $A \approx 125664$.  
We choose $\delta $ sufficiently small so that collars  $C_1$ and $C_2$ around $B_1, B_2$ each have volume less than .01.

We now look for an isoperimetric region $W$ in $Q^3(100,d)$ that encloses volume  $V = 8377580$, approximately  the volume of $B_1 \cup B_2$.  
We first note that an isoperimetric region exists, since  $Q_3(100,d)$ is isometric to $\HH^3$ outside a compact set,  
\cite[Theorem 1.21]{RitoreSinestrari}.
Again we consider three cases, according to the three possible ways that a region $W$ can intersect $B_1 \cup  B_2$.

One candidate for an isoperimetric region enclosing volume  $ 8377580 $ is $W_0$,  which consists of two flat balls in each of $B_1, B_2$. 
Its boundary area satisfies  $|\partial W_0|   <  251229$. We will show that this is less than the boundary area of any competing connected region.

We first consider a region  $W \subset Q_3(100,d)$ enclosing volume $V$ and disjoint from both $B_1 \cup C_1$ and $ B_2 \cup C_2$. 
Thus $W$ is isometric to a subset of hyperbolic space and we can bound its boundary area using the
isoperimetric inequality for $\HH^3$.
In hyperbolic 3-space, the surface area of a sphere of radius $r$ is
$4 \pi  \sinh^2 (r ) = 2\pi (\cosh (2r) -1)$ and the volume of a radius $r$  ball is 
$   \pi   \sinh  (2r) - 2 \pi r$.  
Using this, we see that  the volume of a ball of radius 7.74475
is approximately 8377580, and the boundary area of this ball is 
$ |\partial E|  >  1.67 \times 10^7  $ which is larger than $  |\partial W_0| \approx 251327$.
So an isoperimetric region cannot be disjoint from both $B_1 \cup C_1$ and $ B_2 \cup C_2$.

Now suppose that a connected isoperimetric region $W$ meets just one of  $B_1 \cup C_1$ and $ B_2 \cup C_2$, say
$B_1 \cup C_1$ , and is disjoint from the second. 
Since the volume of $B_1 \cup C_1$  is roughly half the volume enclosed by $W$, we must have that
$W$ intersects the complement of $B_1 \cup C_1$ in a region
whose volume is greater than $V/3 >2792526 $.
We now apply a result of Choe and Ritore, who established  isoperimetric inequalities
for regions in the complement of a convex region in hyperbolic space.
These give a lower bound on the area  $|\partial W|$ in the complement of  $B_1 \cup C_1$.
Choe and Ritore show  that any region in the complement of a convex region in $\HH^3$ that encloses volume $V/3$ has
boundary area no less than a hemisphere enclosing the same volume of $V/3$ \cite[Theorem 3.2]{ChoeRitore:07}. 
In particular, a computation shows that the area of such a boundary surface 
is at least $5.585 \times 10^6 $ which again is larger than $ |\partial W_0|  \approx 251327 $.
Thus a connected isoperimetric region cannot be disjoint from $ B_2 \cup C_2$.  

In the remaining case a connected isoperimetric region  $W$ meets both  $B_1 \cup C_1$ and $ B_2 \cup C_2$. 
We  show that for $d$ sufficiently large, the  boundary of such an isoperimetric region must have  area $|\partial W| > 2.6 \times 10^5 $,
eliminating it as a possible isoperimetric region.
Note that it is not true that any connected region meeting both $B_1 \cup C_1$ and $ B_2 \cup C_2$ has very large area, since
it is possible to span the region between these balls by a thin tube of arbitrarily low area.
Thus we cannot say that $|\partial W|$ is large simply because $d$ is large, as in dimension two.
However we know that the boundary surface $\partial W$ of an isoperimetric region 
is a stable constant mean curvature, so that there is a tubular neighborhood of $\partial W$  that contains
no nearby surface of less area that encloses the same volume.
We now show that the area of any connected stable constant mean curvature surface that meets both  $B_1$ and $B_2$
goes to infinity with $d$, and in particular that for such $W$, $|\partial W| > 251328$  for large $d$.

Suppose that $W$ is an isoperimetric region  that  meets both   $(B_1 \cup C_1$) and $( B_2 \cup C_2)$.
Then $\partial W$  is a stable constant mean curvature surface in $\HH^3$.
Let $\gamma \subset Q$ be a geodesic connecting $\partial C_1$ and $\partial C_2$.
Then there is a family of totally geodesic planes $H_t, ~ 0 \le t \le d$ in $Q$ that are perpendicular to $\gamma$
and separate $B_1$ from $B_2$.
For constants $0 \le a < b \le d$ define the region $I_{[a, b]}$ to be the submanifold of $Q$
between $H_{a}$ and $H_{b}$, isometric to  a submanifold of hyperbolic space bounded by
two hyperbolic planes at distance $b-a$. We consider the intersection of the 
stable constant mean curvature surface $\partial W$ with $I_{[a, b]}$.  

We claim that there is a constant $C$ such that  the second fundamental form of  $\partial W \cap I_{[d/4, 3d/4]}$ 
is bounded above by $C$ for all $d \ge 4$. Our argument follows an argument given by
Meeks, Perez and Ros  \cite[Theorem 2.16]{MeeksPerezRos}.  
If there is a sequence of values $d_i$ and a sequence
of isoperimetric regions $W_i$ with no upper bound  for the second fundamental form on the surfaces
 $\partial W_i \cap  I_{[d_i/4, 3d_i/4]}$, then a sequence of blow ups, or rescalings, at  a point of increasing
second fundamental form yields a sequence of stable constant mean curvature surfaces
with second fundamental form having norm one at that point. The limit of such a sequence of
rescalings gives a complete stable minimal surface
in $\RR^3$ that is not flat. However a
theorem of do Carmo and Peng  \cite{doCarmoPeng}, and  independently 
Fischer-Colbrie and Schoen \cite{FischerColbrieSchoen}, shows that the plane is the only stable
minimal surface in $\RR^3$. Thus the second fundamental form of  $\partial W \cap I_{[d/4, 3d/4]}$ 
is uniformly bounded above by some constant $C$, independent of $d$.

For $w \in \HH^3$, let $B(w,\epsilon)$ denote the ball about $w$ of radius $\epsilon$.
An upper bound on the second fundamental form of $\partial W$ implies
 a uniform lower bound on Area$(\partial W \cap B(w,\epsilon))$ for $w \in \partial W$ and for
some  fixed small $\epsilon>0$.  Indeed, for small enough
 $\epsilon$,  $ B(w,\epsilon)$ meets $\partial W$ in a topological disk containing $w$,
and the area of such a disk  is at least that of a radius $\epsilon$ hyperbolic disk, or
$2\pi (\cosh (\epsilon) -1).$  

If we take 
$$
d >  \frac{251328(4 \epsilon) }{   \pi (\cosh (\epsilon) -1)  } ,
$$
then there is enough room in $ I_{[d/4, 3d/4]}$ 
for $251328 /(4\epsilon)$ disjoint balls centered on points of $\partial W$,
each intersecting   $\partial W$ in an area of no less than $2\pi (\cosh (\epsilon) -1).$ 
We conclude that the area of $\partial W$ is at least $251328$ for such $d$ and that
$W$ cannot be an isoperimetric region enclosing volume $V$.
Thus we have eliminated the possibility of a connected isoperimetric region in $Q(d)$ for $d$ large.  
We conclude that the isoperimetric region in $Q(d)$ for volume $V=8377580$ is not connected for sufficiently large values of $d$.
\qed

\section{Remarks} \label{remarks}

The constructions we gave for dimension two and three can be extended to all higher dimensions, and
it is likely that the resulting Cartan--Hadamard manifolds have similar disconnected isoperimetric regions.
However the arguments we gave rely on results known only in two and  three dimensions.  In particular,
we do not know the analogs of the results of Choe and Ritore \cite{ChoeRitore:07},
do Carmo and Peng  \cite{doCarmoPeng} or
Fischer-Colbrie and Schoen in higher dimensions.

While disconnected regions are certainly not convex, it is natural to ask whether each connected component
of an isoperimetric regions is convex.
In a two-dimensional Cartan--Hadamard manifold, this is indeed the case, and
each component of an isoperimetric region is convex. 
 
\begin{lemma} ~\label{convex}
Each component of an isoperimetric region $W$ in a two-dimensional Cartan--Hadamard manifold is a convex disk.
 \end{lemma}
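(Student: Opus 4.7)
The plan is to deduce both statements from the fact that $\partial W$ is smooth with constant geodesic curvature $H$ on every component, together with Gauss--Bonnet. I use the convention that $\kappa_g$ of a boundary curve is measured with the normal pointing into the bounded region; then $H>0$ corresponds to a convex boundary and the formula $\int_\Omega K\,dA+\int_{\partial\Omega}\kappa_g\,ds+\sum(\pi-\phi_i)=2\pi$ holds for a topological disk $\Omega$ with piecewise smooth boundary and interior angles $\phi_i\in[0,\pi]$.

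First I would show that each component $W_0$ is a topological disk. Since $M$ is diffeomorphic to $\RR^2$, every simple closed curve in $\partial W_0$ bounds a disk. Let $\gamma_0$ be the outer boundary of $W_0$ and $D_0\supseteq W_0$ the disk it bounds. Gauss--Bonnet on $D_0$ gives
\[
\int_{D_0} K\,dA + H\,|\gamma_0|=2\pi,
\]
and since $K\le 0$ this forces $H>0$. Now suppose that $\partial W_0$ also contains an inner component $\gamma_1$ bounding a ``hole'' disk $U_1\subseteq D_0\setminus W_0$. Along $\gamma_1$ the inward normal of $U_1$ reverses the inward normal of $W_0$, so the geodesic curvature of $\gamma_1$ viewed as $\partial U_1$ equals $-H$; Gauss--Bonnet on $U_1$ then forces $-H|\gamma_1|\ge 2\pi$, i.e.\ $H<0$. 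This contradicts $H>0$, so $W_0$ is a disk and $H>0$.

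For convexity, suppose for contradiction that some geodesic $\sigma\colon[0,1]\to M$ with endpoints in $W_0$ leaves $W_0$, and choose $[t_1,t_2]\subset[0,1]$ with $\sigma(t_1),\sigma(t_2)\in\gamma_0$ and $\sigma((t_1,t_2))\subset M\setminus\overline{D_0}$. The graph $\sigma|_{[t_1,t_2]}\cup\gamma_0$ is a theta graph in $M\cong\RR^2$, so its complement has exactly three regions, one of which (call it $\Omega$) is a bounded disk lying in $M\setminus D_0$; its boundary consists of $\sigma|_{[t_1,t_2]}$ together with one of the two subarcs $\alpha\subseteq\gamma_0$ joining $\sigma(t_1)$ to $\sigma(t_2)$. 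Uniqueness of geodesics in Cartan--Hadamard manifolds ensures $\sigma(t_1)\ne\sigma(t_2)$, hence $|\alpha|>0$. Along $\alpha$ the inward normal of $\Omega$ is opposite to that of $W_0$, so $\kappa_g=-H$ there. With $\phi_1,\phi_2\in[0,\pi]$ denoting the interior angles of $\Omega$ at the two corners, Gauss--Bonnet yields
\[
\int_\Omega K\,dA - H\,|\alpha| = \phi_1+\phi_2.
\]
The right-hand side is nonnegative, but the left is strictly negative since $K\le 0$, $H>0$, and $|\alpha|>0$. This contradiction shows $W_0$ is convex.

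The one ingredient I take for granted is the $C^2$ regularity of $\partial W$ and the fact that its geodesic curvature is a single constant $H$ on every boundary component; this is standard for isoperimetric boundaries in dimension two. The place to be most careful is keeping the inward-normal convention consistent when the same curve bounds two different regions (as for $\gamma_1$ viewed from $W_0$ versus from $U_1$, and for $\alpha$ viewed from $W_0$ versus from $\Omega$); this is where the crucial sign flips $H\leftrightarrow -H$ originate, and it is the main potential source of error. A pleasant feature is that tangential crossings ($\phi_i=0$) require no separate treatment, since only the inequalities $\phi_i\ge 0$ are used.
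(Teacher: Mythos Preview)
Your argument is correct, but the route is genuinely different from the paper's. The paper gives a short variational cut--and--paste argument: if a component failed to be convex, take a geodesic chord $\gamma$ from $a$ to $b$ on $\partial W$ whose interior lies outside $W$; replacing the adjacent boundary arc by $\gamma$ strictly shortens the boundary while enlarging the enclosed area, and a compensating small inward push elsewhere (together with sliding $b$ toward $a$) restores the area with a net decrease in length, contradicting minimality. No explicit use of Gauss--Bonnet or of the constant $H$ is made, and the ``disk'' conclusion is left implicit in the convexity.

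Your approach instead exploits regularity: you first read off $H>0$ and simple connectivity from Gauss--Bonnet on the outer disk and on a hypothetical hole, and then rule out nonconvexity by applying Gauss--Bonnet to the exterior lune $\Omega$ bounded by the geodesic chord and an arc $\alpha\subset\partial W_0$. This makes the role of $K\le 0$ completely explicit and proves the two assertions (topological disk, convexity) separately, at the cost of invoking the standard regularity/constant--mean--curvature fact for isoperimetric boundaries. Two small remarks: your hole $U_1$ should be chosen to be bounded by an \emph{innermost} inner boundary component so that indeed $U_1\subseteq D_0\setminus W_0$; and your restriction $\phi_i\in[0,\pi]$ in the Gauss--Bonnet statement is unnecessary, since you only use $\phi_i\ge 0$, which holds for interior angles in general. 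Neither point affects the validity of the proof.
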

\begin{proof}
Suppose not.  Then there is a geodesic arc $\gamma$ from $a \in \partial W$ to $b \in \partial W$
whose interior lies outside of $W$.  
Deforming $ \partial W$ inwards slightly 
at a point disjoint from the arc between
$a,b$ results in a shortening of length that decreases the enclosed area.
Replacing the adjacent arc of $\partial W$ between $a,b$ with
$\gamma$ gives a shorter curve enclosing more area.  By sliding the point $b$ closer to $a$ we can
arrange that the increase in area matches the decrease from the first deformation, resulting in a
shorter curve enclosing an equal  area. This contradicts our assumption that $W$ is an isoperimetric region. 
 \end{proof}
 
Nonconvex isoperimetric regions appear to exist in three dimensions, though we have not constructed an explicit example.
One plausible approach to construction of a nonconvex isoperimetric region follows the methods used to prove Theorem~\ref{dim3}.
Namely, we move $B_1$ and $B_2$ closer together, decreasing $d$, until a connected region
becomes more efficient at enclosing volume $V$ than disconnected regions with components in 
each of $B_1$ and $B_2$.  Note that this happens for some positive $d$.  
As in Theorem~\ref{dim3} an isoperimetric region $W$ will necessarily contain large subregions in
each of $B_1$ and $B_2$.  These will have a boundary surface with mean curvature
close to zero.
Convexity implies that these regions are connected by a surface that spans the hyperbolic
region between  $B_1$ and $B_2$.  If we look at the intersection of this surface
with the hyperbolic plane separating $B_1$ and $B_2$ then we see that its intersection
with this plane is a convex curve whose  mean curvature is greater than 1/2.
Since $\partial W$ has constant mean curvature, this gives a contradiction to the assumption of
convexity.  This suggests that convexity is unlikely to hold for connected isoperimetric regions.

The examples we constructed were flat on two balls and non-positively curved everywhere.
It is straightforward to make a small perturbation of the constructed metrics to give 
examples of disconnected isoperimetric regions on manifolds with strictly negative curvature.
One simply perturbs the function $h(r)$ of Section~\ref{dimensiontwo} slightly so that it is everywhere
strictly convex.

\end{document}